\newcommand{\R}{{\mathbb R}}
\newcommand{\N}{{\mathbb N}}
\newcommand{\E}{{\mathbb E}}
\newcommand{\PP}{{\mathbb P}}
\newcommand{\func}[2]{#1 \left( #2 \right)}
\newcommand{\parent}[1]{ \left( #1 \right)}
\newcommand{\setbuilder}[2]{ \left\{ #1 \mid #2 \right\}}
\numberwithin{equation}{section}
\begin{document}
\newtheorem{theorem}{Theorem}[section]
\newtheorem{definition}[theorem]{Definition}
\newtheorem{lemma}[theorem]{Lemma}
\newtheorem{note}[theorem]{Note}
\newtheorem{corollary}[theorem]{Corollary}
\newtheorem{proposition}[theorem]{Proposition}
\renewcommand{\theequation}{\arabic{section}.\arabic{equation}}
\newcommand{\newsection}[1]{\setcounter{equation}{0} \section{#1}}
\title{The Kac formula and Poincar\'{e} recurrence theorem in Riesz spaces\footnote{This research was funded in part by the joint South Africa - Tunisia Grant (SA-NRF: SATN180717350298, Grant number 120112.). \\ {\bf Keywords:} Kac's formula, Riesz spaces, vector lattices, ergodic theory, recurrence.\\ {\bf MSC(2020): } 47B60, 37A30, 47A35, 60A10.}}
\author{
 Youssef Azouzi ${}^\dagger$\\
 Mohamed Amine Ben Amor ${}^\dagger$\\ 
 Jonathan Homann  ${}^\sharp$ \\ 
 Marwa Masmoudi ${}^\dagger$ \\
  Bruce A. Watson  ${}^\sharp$\\ \\
 ${}^\sharp$ School of Mathematics\\
 University of the Witwatersrand\\
 Private Bag 3, P O WITS 2050, South Africa\\ \\
 ${}^\dagger$ LATAO Laboratory,\\
Faculty of Mathematical, Physical and Natural Sciences of Tunis\\
Tunis-El Manar University,\\ 2092 El Manar, Tunisia\\ }

\maketitle

\begin{abstract}\noindent
Riesz space (non-pointwise) generalizations for iterative processes are given for the concepts of recurrence, first recurrence and conditional ergodicity. Riesz space conditional versions of the Poincar\'{e} Recurrence Theorem and the Kac formula are developed.
Under mild assumptions, it is shown that every conditional expectation preserving process is conditionally ergodic with respect to the conditional expectation generated by the Ces\`aro mean associated with the iterates of the process. 
Applied to processes in $L^1(\Omega,{\mathcal A},\mu)$, where $\mu$ is a probability measure,  new conditional versions of the above theorems are obtained.
\end{abstract}

\parindent=0cm
\parskip=0.5cm



\section{Introduction} \label{s: introduction}

Riesz space generalizations of stochastic processes have been well studied, see for example \cite{klw-1, stoica, troitski} for some of the earliest and recently \cite{grobler}. 
In this setting analogues of the Hopf-Garsia, Birkhoff and Wiener-Kakutani-Yoshida ergodic theorems were given. More recently mixing processes were considered, see \cite{krw} and \cite{hkw-1}, which revisiting the concept of ergodicity in Riesz space, see \cite{hkw-2}. 
The current work builds on this foundation to consider in the Riesz space setting a  Poincare Recurrence Theorem and the Kac formula for the (conditional) mean of the recurrence time of a conditionally ergodic process. 
 We refer the reader to \cite[pages 67-103]{EFHN} and \cite[pages 33-48]{petersen}, non-conditional for the measure theoretic versions of these results.
Fundamental to this is the Riesz space analogue of the $L^p$ spaces (in particular $L^1$), introduced in \cite{expectation paper} and further studied in \cite{AT} and \cite{krw}. The measure theoretic version of this spatial extension with respect to a conditional expectation operator was considered in \cite{dPG, HdP, dPDH}. 
It is also shown, under mild assumptions, that every conditional expectation preserving process is conditionally ergodic with respect to the conditional expectation generated by the Ces\`aro mean associated with the iterates of the process. 
When applied to processes in $L^1(\Omega,{\mathcal A},\mu)$, where $\mu$ is a probability measure,  new conditional versions of the above theorems are obtained.  In particular, the Riesz space version of the Kac formula  yields a conditional Kac formula measure preserving systems.

The starting point of working with stochastic processes in a Riesz space is the definition of conditional expectation operators on Riesz spaces.  We use the definition of \cite{klw-1},  which when restricted to a probability space with the prefer weak order unit the a.e. constant $1$ function, yields the classical definition of a conditional expectation.

\begin{definition}
Let $E$ be a Riesz space with weak order unit. A positive order continuous projection $T \colon E \rightarrow E$, with range, $R(T)$, a Dedekind complete Riesz subspace of $E$, is called a conditional expectation if $Te$ is a weak order unit of $E$ for each weak order unit $e$ of $E$.
\end{definition}

We recall, see \cite{petersen}, that  $\parent{\Omega,\3B,\mu,\tau}$ is called a measure preserving system if $\parent{\Omega,\3B,\mu}$ is a probability space and ${\tau}:\Omega \rightarrow \Omega$ is a mapping with $\func{\mu}{\func{{\tau}^{-1}}{A}}=\func{\mu}{A}$ for each $A \in \3B$.
A Riesz space generalization of the concept of a measure preserving system was introduced in \cite{hkw-1} as a conditional expectation preserving system, see below.

\begin{definition} \label{system definition}
The 4-tuple, $\parent{E,T,S,e}$, is called a conditional expectation preserving system if $E$ is a Dedekind complete Riesz space, $e$ is a weak order unit for $E$, $T$ is a conditional expectation operator in $E$ with $Te=e$ and $S$ is an order continuous Riesz homomorphism on $E$ with $Se=e$ and $TS=T$.
\end{definition}

By Freudenthal's theorem the condition $TSf=Tf$ for all $f \in E$ in the above definition is equivalent to $TSPe=TPe$ for all band projections $P$ on $E$.

Let  $(E,T,S,e)$ be a conditional expectation preserving system.
For $f \in E$ and $n \in \mathbb{N}$ we denote
	\begin{equation}
	S_nf := \frac{1}{n}\sum_{k=0}^{n-1} S^kf.
	\label{s_n eqn def}
	\end{equation}
	We set
	\begin{equation}
	L_Sf := \lim_{n \rightarrow \infty} S_n f,	
	\label{fhat eqn def}
	\end{equation}
where the above limit is the order limit and $f\in \3E_S$ where $\3E_S$ is the set of $f\in E$ for which the above order limit exists.  Thus $L_S:\3E_S\to E$.
The set of $S$-invariant $f \in E$ will be denoted 
$\3I_S := \setbuilder{f \in E}{Sf=f}$.
By  \cite[Lemma 2.3]{hkw-2}, $\3I_S\subset \3E_S$ and  $L_Sf=f$ for all $f\in {\mathcal I}_S$. Further, it is easily seen that if $f\in \3E_S$ and $L_Sf=f$ then $f\in {\mathcal I}_S$, see \cite[Theorem 2.4]{hkw-2}. These ideas are further expanded in Section \ref{sec-pre}.

In a measure preserving system $\parent{\Omega,\3B,\mu,\tau}$, a set $B\in \3B$ is said to be $\tau$-invariant if $\mu(\tau^{-1}(B)\Delta B)=0$, i.e. $\chi_B=\chi_{\tau^{-1}(B)}$ a.e.. If we take $Sf:=f\circ \tau$ then $\chi_{\tau^{-1}(B)}=S\chi_{B}$ and, in the $L^1(\Omega,\3B,\mu)$ sense,  $\tau$ invariance of $B$ is equivalent to $S$ invariance of $\chi_B$. 
The measure preserving system $\parent{\Omega,\3B,\mu,\tau}$ said to be ergodic, see \cite[page 42]{petersen}, if every $\tau$-invariant set has measure $0$ or $1$.  This is equivalent to $\chi_B$ being the zero or $1$ constant function in the $L^1(\Omega,\3B,\mu)$ sense. This can be equivalently stated as $\chi_B$ being a constant function in the $L^1(\Omega,\3B,\mu)$ sense, since the $0$ and the $1$ functions are the only available possibilities in the a.e. sense.  Taking $T$ as the expectation operator on $L^1(\Omega,\3B,\mu)$, with range the a.e. constant functions, we have that  $\parent{\Omega,\3B,\mu,\tau}$ is ergodic if and only if $\chi_B\in R(T)$ for each $S$ invariant $\chi_B$.
This equates in the Riesz space sense to each $S$ invariant component of the weak order unit $e$ being in $R(T)$. However, as $S$ is a Riesz homomorphism, the $S$ invariant elements form a Riesz subspace generated 
by the $S$ invariant components of $e$. 
Applying Freudenthal's Theorem and using that $e$ is a weak order unit, we have that 
$\3I_S\subset \func{R}{T}$ if and only of each $S$ invariant component of $e$ is in $R(T)$. Thus we have the following definition of conditional ergodicity in a Riesz space, which is equivalent to that given in \cite{hkw-2}, since $L_Sf=f$ if and only if $f\in\3I_S$.

\begin{definition}[Ergodicity] \label{ergodicity def}
The conditional expectation preserving system $\parent{E,T,S,e}$ is said to be $T$-conditionally ergodic if $\3I_S\subset \func{R}{T}$.
\end{definition}

The notion of recurrence appears in ergodic theory,  but only a measure preserving system is required to define recurrence. 
In particular, see \cite[Definition 3.1, pg 34]{petersen}, for
$\parent{\Omega,\3B,\mu,\tau}$ a measure preserving system and $B \in \3B$, a point $x \in B$ is said to be recurrent with respect to $B$ if there is a $k \in \2N$ for which ${\tau}^{k}x \in B$.
In this setting, see \cite[Theorem 3.2, pg 34]{petersen},  
the Poincar\'{e} Recurrence Theorem gives that
for each $B \in \3B$ almost every point of $B$ is recurrent with respect to $B$.

The concept of recurrence in a Riesz space will be given in terms of components of a chosen weak order unit.
A component $p$ of $q$ which is a component of $e$ will be said to be recurrent with respect to $q$ if $p$ can be decomposed into a countable sequence of components, each one of which maps to a component of $q$ under some iterate of the map $S$. This is formally defined below.

\begin{definition}[Recurrence] \label{recurrence definition}
Let $\parent{E,T,S,e}$ be a conditional expectation preserving system and $p,q$ be components of $e$ with $p\le q$.
We say that $p$ is recurrent with respect to $q$ if there are components $p_n, n\in\N,$ of $p$ so that $\displaystyle{\bigvee_{n\in\N} p_n=p}$ and  $S^np_n\le q$ for each $n\in\mathbb{N}$.
\end{definition}

To understand the connection with the classical measure theoretic definitions we consider the simplest case, that of  $(\Omega,\Sigma,\mu)$ a probability space
with a set $B\in \Sigma$ having $\mu(B)>0$. 
We take as the Riesz space $E=L^1(\Omega,\Sigma,\mu)$,  $e$ as the function on $\Omega$ which takes the value $1$ a.e.  on $\Omega$. 
If $\tau$ is a measure preserving transformation on $\Omega$ we take $Sf(x)=f(\tau(x))$ for each $x\in\Omega$ and $f\in E$.  
For $A\in \Sigma$ with $B\subset A$ let $p=\chi_B$ and $q=\chi_A$ then we have that $B$ is recurrent with respect to $A$ if and only if $B$ can be decomposed into a countable union of measurable sets $B_n$ so that 
$\tau^n(B_n)\subset A$, which is equivalent to
$$B\subset \bigcup_{n=1}^\infty \tau^{-n}(A).$$
Here the Poincar\'{e} Recurrence Theorem gives that if $x \in A \in \3B$ then ${\tau}^kx \in A$ for some $k \in \N$,
and $k$ is a time taken to recur. These ideas are taken to the Riesz space setting in Section \ref{sec-poin} where a Riesz space version of the Poincar\`{e} Recurrence Theorem is presented in Theorem \ref{thm-poincare}.

If $x$ is recurrent with respect to $A$ there will be multiple recurrence times, but what is of interest to us is the first recurrence time.
The time of first entry of $x$ into $A$ is given by
$$n_A(x):=\inf\{n\in\mathbb{N}\,|\,\tau^nx\in A\}$$
and $x\notin A$ we set $n_A(x)=0$. 
The function $n_A$ is called the first recurrence time with respect to $A$.

If we take
$$N_A(n)=\bigcap_{j=1}^n (A\setminus\tau^{-j}(A))= \bigcup\{ B\in \3B\,|\,B\subset A,\, A\cap \tau^j(B)=\emptyset\,\forall j=1,\dots,n\}$$
then $N_A(n)$ is the set of points of $A$ not recurrent in the first $n$ iterates.
Thus for $x\in \Omega$, the first recurrence time of $x$ with respect to $A$ is
$$n_A(x)=\sum_{x\in N_A(k-1)\setminus N_A(k)\atop k\in\N} k$$
where we have set $N_A(0)=A$.
The Kac Formula, \cite[Theorem 4.6, pg 46]{petersen},  gives that the spatial average over $\Omega$ of the first recurrence time with respect to $A$, with $\mu(A)>0$, is $1$, i.e. ${\int_\Omega n_A\,d\mu=1}.$

We now extend these concepts to the Riesz space setting.
For $S$ a bijective Riesz homomorphism and $p$ a component of $e$ in $E$, we set
\begin{equation}\label{np}
N_p(n)=\bigvee\{q\,|\, q \mbox{ a component of } p \mbox{ in } E, p\wedge S^jq =0 \mbox{ for all } j=1,\dots n\}
\end{equation}
and 
\begin{equation}
N_p(0)=p.\label{np0}
\end{equation}
Here $N_p(n)$ is the maximal component of $p$ to have no component recurrent with respect to $p$ in under (less than) $n+1$ iterates of $S$.
Further $N_p(n-1)-N_p(n)$ is the maximal component of $p$ to be recurrent with respect to $p$ in exactly $n$ iterates of $S$. We thus define first recurrence time as 
$$n_p=\sum_{k=1}^\infty k (N_p(k-1)-N_p(k))$$
Here $n_p$ exists in $E_u^+$, the positive cone of the universal completion of $E$, as $N_p(k-1)-N_p(k), k\in\N$, are disjoint components of $e$ in $E$, see \cite[page 363]{L-Z}.
Having given a form for $n_p$ which exists in $E_u^+$ we can conveniently rewrite $n_p$ as
\begin{equation}\label{n-p}
n_p=\sum_{k=0}^\infty N_p(k).
\end{equation}
These concepts form the focus of Section \ref{sec-kac} which culminates in
Theorem \ref{kac-2}, a conditional Riesz space version of Kac's Lemma which does not require the concept of ergodicity. For conditionally ergodic processes Theorem \ref{kac-2} yields Corollary \ref{kac-3}, which is a conditional Riesz space analogue of the classical Kac Lemma.

We complete the work with an application of these results to probabilitistic systems in Section \ref{sec-appl}. Of particular interest here is the extension of Kac's Lemma to conditionally ergodic processes.

\section{Preliminaries} \label{sec-pre}
Background material on Riesz spaces can be found in \cite{riesz book}.
We say that a positive operator $U$ on the Riesz space, $E$, is strictly positive if $Uf=0$ for $f\in E^+$ implies  $f=0$. We denote by $C_p(E)$ the set of components of $p$ in $E$.  
We note that if $e$ is a weak order unit for $E$, then $E_e$, the ideal in $E$ generated by $e$, has an $f$-algebra structure in which $e$ is the algebraic unit and for $p,q\in C_e(E)$ we have that $p\wedge q =pq$.

\begin{lemma}\label{injective}
Let $\parent{E,T,S,e}$ be a conditional expectation preserving system  with $T$ strictly positive,  then $S$ is injective.
\end{lemma}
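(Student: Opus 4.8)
The plan is to reduce injectivity of the linear operator $S$ to the statement that $Sf = 0$ with $f \in E^+$ forces $f = 0$, and then to exploit the intertwining relation $TS = T$ together with the strict positivity of $T$.

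First I would observe that, since $S$ is a Riesz homomorphism, $S|f| = |Sf|$ for every $f \in E$; hence $Sf = 0$ is equivalent to $S|f| = 0$. So it suffices to show that the kernel of $S$ contains no nonzero positive element, i.e. that $S$ is strictly positive. (Alternatively, one may split $f = f^+ - f^-$, note $Sf^+ \wedge Sf^- = S(f^+ \wedge f^-) = 0$, and conclude $Sf^+ = Sf^- = 0$; either reduction works.)

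Next, suppose $f \in E^+$ with $Sf = 0$. Applying $T$ and using the defining property $TS = T$ of a conditional expectation preserving system, we get $Tf = TSf = T0 = 0$. Since $f \geq 0$ and $T$ is strictly positive by hypothesis, this yields $f = 0$. Combining with the reduction of the previous step, $Sf = 0$ implies $f = 0$ for arbitrary $f \in E$, so $S$ is injective.

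I do not expect a genuine obstacle here: the argument is a two-line consequence of $TS = T$ and strict positivity of $T$. The only point that needs a moment's care is the reduction to the positive cone, which is exactly where the Riesz homomorphism property of $S$ (or at least its positivity together with lattice compatibility) is used; everything else is immediate.
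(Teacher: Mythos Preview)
Your proposal is correct and is essentially the same argument as the paper's: reduce to the positive cone via $S|f|=|Sf|$, then apply $TS=T$ and the strict positivity of $T$. The only cosmetic difference is that the paper works directly with $|f|$ rather than first passing to $f\in E^+$.
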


\begin{proof}
As $S$ is a Riesz homomorphism $S(|f|)=|Sf|$. Thus if 
$f\in E$ with $Sf=0$,  then  
$S(|f|)=|Sf|=0.$
Hence $T|f|=TS|f|=0$,  but $T$ is strictly positive so $|f|=0$ giving $f=0$.
\end{proof}

For $\parent{E,T,S,e}$ a conditional expectation preserving system, if $S$ is bijective then the map $S^{-1}$ is a bijective lattice homomorphism which has $S^{-1}e=e$ and $TS^{-1}=T$ making $\parent{E,T,S^{-1},e}$ a conditional expectation preserving system.
Further, if $Sa\wedge Sb=0$ then $a\wedge b=0$ and  $Su$ is a component of $Sx$ if and only if $u$ is a component of $x$.
In particular,  $S$ and $S^{-1}$ map components of $e$ to components of $e$.

\begin{lemma} \label{lemma1}
Let $S$ be a bijective Riesz homomorphism, $p$ be a component of $e$ and let $a$ be a component of $p.$
 Then $a$ has a unique decomposition $a=b+c$ in two disjoint components of $a$ (and hence also of $p$ and of $e$) with
$p\wedge Sb=Sb$ and $p\wedge Sc=0.$ 
\end{lemma}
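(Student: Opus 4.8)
The plan is to construct $b$ and $c$ explicitly and then verify the four required properties, working throughout in the $f$-algebra $E_e$, the ideal generated by $e$, in which for components $x,y$ of $e$ one has $x\wedge y = xy$ and $x^2 = x$. First I would set $b := S^{-1}(p\wedge Sa)$ and $c := a - b$. Since $a$ is a component of $e$, so is $Sa$ (as $S$ carries components of $e$ to components of $e$), hence $p\wedge Sa\in C_e(E)$, and therefore so is $b$ (as $S^{-1}$ also carries components of $e$ to components of $e$). Because $S$ is an order-preserving bijection, $Sb = p\wedge Sa\le Sa$ forces $b\le a$; thus $b\in C_e(E)$ with $b\le a$, i.e. $b$ is a component of $a$, and then $c = a - b$ is also a component of $a$ with $b\wedge c = 0$. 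As $a\le p\le e$, both $b$ and $c$ are then components of $p$ and of $e$ as claimed.

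Next I would check the two lattice identities. The first is immediate: $Sb = p\wedge Sa$, so $p\wedge Sb = p\wedge(p\wedge Sa) = Sb$. For the second, note $Sc = Sa - Sb = Sa - p\wedge Sa\le e$, so the $f$-algebra computation is legitimate and gives $p\wedge Sc = p\cdot Sc = p\,Sa - p^2\,Sa = p\,Sa - p\,Sa = 0$, using $p^2 = p$.

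For uniqueness, suppose $a = b' + c'$ with $b',c'$ disjoint components of $a$ satisfying $p\wedge Sb' = Sb'$ and $p\wedge Sc' = 0$. Then $Sb'\le p$ (from the first relation) and $Sb'\le Sa$ (since $b'\le a$ and $S$ is order-preserving), so $Sb'\le p\wedge Sa$. Also $Sb' + Sc' = Sa$, and in the $f$-algebra $0 = p\cdot Sc' = p\,Sa - p\,Sb' = p\,Sa - Sb'$ (the last equality because $Sb'\le p$ gives $p\cdot Sb' = Sb'$); hence $Sb' = p\,Sa = p\wedge Sa = Sb$. Since $S$ is injective (being bijective, cf. also Lemma \ref{injective}), $b' = b$, and therefore $c' = a - b' = c$.

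The argument is in essence a direct verification, so the main obstacle is not conceptual difficulty but bookkeeping: one must keep careful track of which elements are components of $e$ (so that the identities $x\wedge y = xy$ and $x^2 = x$ apply and so that $S,S^{-1}$ return components of $e$) as opposed to merely components of $a$ or of $p$, and invoke the component-preservation and order-isomorphism properties of $S$ and $S^{-1}$ at precisely the right moments.
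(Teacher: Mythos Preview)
Your proof is correct and is essentially the same as the paper's: since $S^{-1}$ is a lattice homomorphism, your $b=S^{-1}(p\wedge Sa)$ equals the paper's $b=a\wedge S^{-1}p$, and the verifications and uniqueness argument proceed along the same lines (the paper phrases them via lattice operations, you via the $f$-algebra product, but these are interchangeable for components of $e$).
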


\begin{proof}
Let $b=a\wedge S^{-1}p$ and $c=a\wedge S^{-1}(e-p)$.  As $S^{-1}e=e$, and $p, e-p$ are components of $e$ we have that $S^{-1}p$ and $S^{-1}(e-p)$ are components of $e$. Thus $b$ and $c$ are components of $a$ 
with $b\wedge c = a\wedge S^{-1}(p\wedge (e-p))=0$.
Here $b+c=a\wedge S^{-1}(p+(e-p))=a\wedge S^{-1}e=a$ with
$Sc=Sa\wedge (e-p)$ making $Sc\le e-p$ so $p\wedge Sc=0$
and $S^{-1}(p\wedge Sb)=S^{-1}p\wedge a\wedge S^{-1}p=a\wedge S^{-1}p=b$
giving $p\wedge Sb=Sb$.

As for uniqueness, if $a=b'+c'$ with $b'\wedge c'=0$, where $b'$ and $c'$ are components of $a$, with
 $Sb\wedge Sc=0$, $Sb'\wedge Sc'=0$, $p\wedge Sb'=Sb'$ and $p\wedge Sc'=0$ then 
$Sb+Sc=Sb'+Sc'$ and as 
$$Sb=p\wedge(Sb+Sc)=p\wedge(Sb'+Sc')=Sb'$$
making $b=b'$ and thus $c=c'$. 
\end{proof}

\begin{note}\label{note-comp}
	If $Sa \le a$ or $a\le Sa$ for some component $a$, then $a=Sa$.
	To see this, observe that $T(a-Sa)=Ta-TSa=0$ but as $a-Sa\ge 0$ or $Sa-a\ge 0$ from the  strict positivity of $T$ this gives $Sa-a=0$.
\end{note}

The following result, while implicitly contained in various of our previous papers is explicitly needed in the following form in the current work.

\begin{lemma}\label{averaging}
 If $T$ is a strictly positive conditional expectation operator on a Dedekind complete Riesz space with weak order unit $e=Te$, then for each $g\in E_+$ we have that $P_{Tg}\ge P_g$ where $P_{Tg}$ and $P_g$ denote the band projections onto the bands generated by $Tg$ and $g$, respectively.
 \end{lemma}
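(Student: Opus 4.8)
The goal is to show $P_{Tg} \ge P_g$, equivalently that the band generated by $g$ is contained in the band generated by $Tg$, equivalently that every element disjoint from $Tg$ is disjoint from $g$. The plan is to argue via the complementary band projections: it suffices to show that $P_{Tg}g = g$, i.e. that $g$ lies in the band generated by $Tg$. Write $g = P_{Tg}g + (I - P_{Tg})g$ and set $h := (I - P_{Tg})g \ge 0$; I must show $h = 0$. By construction $h$ is disjoint from $Tg$, so $h \wedge Tg = 0$, and since $h \le g$ we have $Th \le Tg$.

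The key step is to use the averaging/Andô-type identity $T(I-P_{Tg})= (I-P_{Tg})T$, which holds because $Tg$ lies in $R(T)$, so the band projection $P_{Tg}$ onto the band generated by $Tg$ commutes with $T$ — this is the standard fact that band projections onto bands generated by elements of the (Dedekind complete) range $R(T)$ commute with the conditional expectation $T$ (see e.g. the averaging property of $T$). Applying this, $Th = T(I-P_{Tg})g = (I-P_{Tg})Tg = 0$, since $P_{Tg}Tg = Tg$. Now $h \ge 0$ and $Th = 0$; strict positivity of $T$ forces $h = 0$. Hence $g = P_{Tg}g$, so $g$ belongs to the band generated by $Tg$, which gives $P_g \le P_{Tg}$ as required.

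The main obstacle is justifying that $P_{Tg}$ commutes with $T$. The cleanest route: since $R(T)$ is a Dedekind complete Riesz subspace and $Tg \in R(T)$, the band in $E$ generated by $Tg$ has band projection $P_{Tg}$, and for any $f \in E^+$ one shows $P_{Tg}Tf = TP_{Tg}f$ by a Freudenthal-type approximation — approximate $P_{Tg}$ applied to things by components, or invoke the known result that $T P_{Tg} = P_{Tg} T P_{Tg} = P_{Tg} T$ valid whenever the band projection has range-measurable generating element; this is precisely the "averaging property" of conditional expectations on Riesz spaces established in the earlier literature cited in the paper. If one prefers to avoid invoking commutation directly, an alternative is: $0 \le h \wedge nTg \le h \wedge (\text{something in band of }Tg)$ and use that $h$ is in the complementary band to conclude $T h$ lies in the complementary band of $Tg$ within $R(T)$ while also $Th \le Tg$ puts it in the band of $Tg$, forcing $Th = 0$; then strict positivity finishes. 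Either way the endgame is the same short application of strict positivity of $T$.
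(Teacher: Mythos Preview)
Your proposal is correct and follows essentially the same route as the paper: set $h=(I-P_{Tg})g$, show $Th=0$ via the commutation $T(I-P_{Tg})=(I-P_{Tg})T$, then conclude $h=0$ by strict positivity. The paper makes the commutation step explicit by writing $(I-P_{Tg})g=g\cdot(I-P_{Tg})e$, observing $(I-P_{Tg})e\in R(T)$ (from the cited fact that $P_fe\in R(T)$ whenever $f\in R(T)_+$), and then applying the multiplicative averaging property $T(fg)=f\,Tg$ for $f\in R(T)$ --- which is exactly the mechanism you gesture at when you say the band projection commutes with $T$.
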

 
 \begin{proof}
 We recall here,  see \cite[Lemma 3.1 (b)]{expectation paper}, that if $f\in R(T)_+$ then $P_fe\in R(T)$.
  The averaging property of the conditional expectation operator, $T$,  in Riesz spaces,  see \cite[Theorem 4.3]{expectation paper},  gives that if $f, g,fg\in E$ with $f\in R(T)$ then $T(fg)=f T(g)$. 
  Here the multiplicative structure in $E$ is the order continuous linear extension of the multiplication which sets $Q_1e\cdot Q_2e:=Q_1Q_2e$ for band projections $Q_1$ and $Q_2$ on $E$. 
  In particular this makes $E$ into a module over the ring $E_e$,  the ideal generated by $e$ in $E$,  and $P_fg=P_fe\cdot g$ for all $g\in E$ and $f\in E_+$.  In addition $E_e$ is an $f$-algebra with respect to this multiplication. 
  
  For $g\in E_+$,  we have $g\cdot (I-P_{Tg})e=(I-P_{Tg})g\in E$.  Since $Tg\in R(T)$, we have $P_{Tg}e\in R(T)$ giving $(I-P_{Tg})e\in R(T)$ so, by the averaging property, 
  $$T(g\cdot (I-P_{Tg})e)=(I-P_{Tg})e\cdot Tg=(I-P_{Tg})Tg =0.$$
  Thus $T(g\cdot (I-P_{Tg})e)=0$ but $g\cdot (I-P_{Tg})e\ge 0$, so the strict positivity of $T$ gives $$g-P_{Tg}g=g\cdot (I-P_{Tg})e=0$$
  making $P_{Tg}g=g$. Thus $g$ is in the band generated by $Tg$ and hence 
  the band generated by $g$ is  contained in the band generated by $Tg$, giving
  $P_{Tg}\ge P_g$.
   \end{proof}

The Dedekind complete Riesz space $E$ is said to be universally complete with respect to $T$ ($T$-universally complete), if, for each increasing net ${\parent{f_{\alpha}}}$ in $E_+$ with $\parent{Tf_{\alpha}}$ order bounded, we have that ${\parent{f_{\alpha}}}$ is order convergent in $E$.  In this case $E$ is an $R(T)$ module and $R(T)$ is an $f$-algebra with the multiplication discussed earlier, see \cite{krw}.

We recall Birkhoff's ergodic theorem for a $T$-universally complete Riesz space from \cite[Theorem 3.9]{ergodic paper}.

\begin{theorem}[Birkhoff's (Complete) Ergodic Theorem]
\label{complete riesz birkhoff theorem}
Let $(E,T,S,e)$ be a conditional expectation preserving system with $T$ strictly positive and $E$ $T$-universally complete then $E=\3E_S$ and hence $L_S=SL_S$. In addition, $TL_S=T$ and $L_S$ is a conditional expectation operator on $E$.
\end{theorem}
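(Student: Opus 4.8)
The plan is to transcribe the classical proof of Birkhoff's theorem into the order language: a conditional Hopf maximal ergodic lemma, the $a<b$ bisection argument for order-bounded elements, and then a transfer to all of $E$ via the universal completion $E_u$ together with the $T$-universal completeness hypothesis. Throughout, the identity $T(S_nf)=Tf$ --- immediate from $TS^k=T$ for all $k$, which follows by induction from $TS=T$ --- plays the role of measure preservation, and strict positivity of $T$ is what upgrades equalities modulo $T$-null bands to genuine ones. The maximal lemma I would prove first: for $f\in E$, writing $f_k=\sum_{j=0}^{k-1}S^jf$ (so $f_0=0$, $f_k=kS_kf$) and $M_n=\bigvee_{0\le k\le n}f_k\ge 0$, one has $T(P_{M_n}f)\ge 0$, where $P_{M_n}$ is the band projection onto the band generated by $M_n$. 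The proof is Garsia's, transcribed: $f_k=f+Sf_{k-1}\le f+SM_n$ for $1\le k\le n$, so $M_n\le f+SM_n$ on the band generated by $M_n$, hence $P_{M_n}f\ge M_n-SM_n$, and applying $T$ with $TSM_n=TM_n$ gives the inequality. Applied to $g-\alpha e$ with $g\in E_+$ and scalar $\alpha>0$, it yields the weak-type estimate $T\big(P_{\{\sup_m S_mg>\alpha e\}}\,e\big)\le\alpha^{-1}Tg$, the supremum taken in $E_u$ and the resulting component of $e$ landing in $E$ by order continuity of $T$.

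For $f\in E_e$ every $S_nf$ lies in a fixed order interval $[-ce,ce]$, so $\overline f:=\limsup_n S_nf$ and $\underline f:=\liminf_n S_nf$ exist in $E$, and since $S$ is order continuous and $\tfrac1nS^nf\to 0$ they lie in $\3I_S$. If $\overline f\ne\underline f$ then $\overline f>\underline f$ on a nonzero band, so there are rationals $a<b$ for which the band projection $R$ onto the band where $\overline f>be$ and simultaneously $\underline f<ae$ is nonzero; as $\overline f,\underline f\in\3I_S$, $R$ commutes with $S$. The maximal lemma applied to the bounded element $R(f-be)$ --- whose $k$-th partial sum is $kR(S_kf-be)$, with supremum strictly positive throughout $RE$ since there $\bigvee_k S_kf\ge\overline f>be$ --- gives $T(Rf)\ge bT(Re)$; applied to $R(ae-f)$ it gives $aT(Re)\ge T(Rf)$. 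Hence $(b-a)T(Re)\le 0$, forcing $T(Re)=0$ and then $Re=0$ by strict positivity, which contradicts $R\ne 0$. So $\overline f=\underline f$ and $E_e\subseteq\3E_S$. To reach all of $E$, let $f\in E_+$: the elements $f\wedge ne\in E_e$ increase to $f$, so $L_S(f\wedge ne)$ increase with $T(L_S(f\wedge ne))=T(f\wedge ne)\le Tf$, and $T$-universal completeness gives $h:=\lim_n L_S(f\wedge ne)\in E$. With $r_n=(f-ne)^+\downarrow 0$ and $M_n^*:=\sup_m S_mr_n\in E_u^+$, the weak-type estimate forces $\bigwedge_n M_n^*=0$ in $E_u$: for each $\alpha>0$ the components $P_{\{M_n^*>\alpha e\}}e$ have $T$-value $\le\alpha^{-1}Tr_n\to 0$, so they decrease to $0$, whence $\bigwedge_n M_n^*\le\alpha e$. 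Computing $\limsup$ and $\liminf$ in $E_u$ one gets $\limsup_m S_mf-\liminf_m S_mf\le M_n^*$ for every $n$, so $S_mf$ converges in order, to a limit $L_Sf$ with $h\le L_Sf\le h+M_n^*$ for all $n$, i.e. $L_Sf=h\in E$. Splitting a general $f$ as $f^+-f^-$ gives $E=\3E_S$.

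The remaining assertions are bookkeeping. $L_S$ is linear and positive as an order limit of the positive linear $S_n$, and $SL_S=L_S$ since $S$ is order continuous and $\tfrac1nS^nf=\tfrac{n+1}{n}S_{n+1}f-S_nf\to 0$; thus $R(L_S)\subseteq\3I_S$, and combined with $L_Sg=g$ on $\3I_S$ from \cite[Lemma 2.3]{hkw-2} this gives $L_S^2=L_S$ and $R(L_S)=\3I_S$, which is a Dedekind complete Riesz subspace (the fixed set of the order continuous Riesz homomorphism $S$). Order continuity of $T$ with $T(S_nf)=Tf$ gives $TL_S=T$, so $L_Sg=0$ with $g\ge 0$ forces $Tg=0$, i.e. $L_S$ is strictly positive; then $L_S$ is order continuous by domination, since $f_\alpha\downarrow 0$ gives $L_Sf_\alpha\downarrow h'\ge 0$ with $Th'\le T(L_Sf_\alpha)=Tf_\alpha\downarrow 0$, hence $h'=0$. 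Finally, each $S^k$ is multiplicative on $E_e$ and fixes $\3I_S\cap E_e$, so $L_S(fg)=fL_Sg$ for $f\in\3I_S\cap E_e$; this averaging property with strict positivity yields $P_{L_Sg}\ge P_g$ for $g\in E_+$ by the argument of Lemma \ref{averaging}, so $L_S$ carries weak order units to weak order units. Hence $L_S$ is a conditional expectation operator.

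The step I expect to demand the most care is the passage from $E_e$ to $E$: one must work in the universal completion $E_u$ to have the suprema $M_n^*$ and the quantities $\limsup_m S_mf$, $\liminf_m S_mf$ at one's disposal, while at the same time invoking $T$-universal completeness to keep $h$, and hence $L_Sf$, inside $E$, with the weak-type estimate the only bridge between the two; this is exactly where the classical argument leans on $\sigma$-finiteness and pointwise values. The maximal lemma, the bisection, and the structural identities for $L_S$ are routine once set up in the order language.
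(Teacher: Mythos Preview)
The paper does not prove this theorem at all: it is merely recalled from \cite[Theorem 3.9]{ergodic paper} (Kuo--Labuschagne--Watson, 2007) as a known result, with no argument given. So there is no ``paper's own proof'' to compare against.

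Your sketch is essentially the proof that appears in that earlier paper: a Riesz-space Hopf--Garsia maximal inequality, the $a<b$ bisection argument on $E_e$ to get $\overline f=\underline f$, and then extension to all of $E$ by cutting $f\in E_+$ at level $n$, invoking $T$-universal completeness to control $L_S(f\wedge ne)\uparrow h\in E$, and using the weak-type estimate on the remainders $r_n=(f-ne)^+$ to kill $\limsup S_mf-\liminf S_mf$ in $E_u$. The structural verifications for $L_S$ (projection onto $\3I_S$, $TL_S=T$, order continuity via the $T$-domination trick, preservation of weak order units via the averaging identity and the argument of Lemma~\ref{averaging}) are also as in the original.

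The step you flag as most delicate is exactly the right one: making sense of $\sup_m S_m r_n$, $\limsup_m S_mf$, $\liminf_m S_mf$ in $E_u$ when $f$ is not order bounded requires care, since $E_u$ is universally complete but these suprema are not a priori order bounded. In the cited paper this is handled by working with the increasing sequence of components $\bigvee_{m\le N}P_{(S_m r_n-\alpha e)^+}e$ (which stay in $E$, being bounded by $e$) rather than with $M_n^*$ itself, and then letting the weak-type bound and strict positivity of $T$ force the limiting component to $0$; your outline is compatible with this, but in a written-out proof you would want to phrase the argument in terms of these components rather than assuming $M_n^*$ exists as an element of $E_u$.
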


Applying the above theorem to the concept of conditional ergodicity as defined in Definition \ref{ergodicity def} we have that $\parent{E,T,S,e}$ is conditionally ergodic if and only $L_S f=Tf$ for all $f \in \3I_S$.
This leads to the following characterization of conditional ergodicity.

\begin{corollary} \label{tsm corollary}
Let $(E,T,S,e)$ be a conditional expectation preserving system with $T$ strictly positive and $E$ $T$-universally complete.
The conditional expectation preserving system $(E,T,S,e)$ is conditionally ergodic if and only if $T=L_S$.
\end{corollary}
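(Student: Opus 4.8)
The plan is to reduce the statement to the characterization recorded just above it, namely that $\parent{E,T,S,e}$ is conditionally ergodic if and only if $L_Sf=Tf$ for every $f\in\3I_S$, and then to bootstrap this equality from $\3I_S$ to all of $E$ using Birkhoff's Complete Ergodic Theorem (Theorem \ref{complete riesz birkhoff theorem}).

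One implication is immediate: if $T=L_S$ as operators on $E$, then in particular $L_Sf=Tf$ for all $f\in\3I_S$, so by the noted characterization the system is conditionally ergodic.

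For the converse, I would assume conditional ergodicity, so $L_Sg=Tg$ for all $g\in\3I_S$, and fix an arbitrary $f\in E$. Since $T$ is strictly positive and $E$ is $T$-universally complete, Theorem \ref{complete riesz birkhoff theorem} gives $E=\3E_S$ (so that $L_Sf$ is defined), $L_S=SL_S$, and $TL_S=T$. From $L_Sf=SL_Sf$ it follows that $L_Sf\in\3I_S$, so the ergodicity hypothesis may be applied to $g=L_Sf$, yielding $L_S(L_Sf)=T(L_Sf)$. On the left-hand side $L_S(L_Sf)=L_Sf$, because $L_Sh=h$ for every $h\in\3I_S$ (equivalently, $L_S$ is idempotent, being a conditional expectation operator by Theorem \ref{complete riesz birkhoff theorem}); on the right-hand side $T(L_Sf)=Tf$ by $TL_S=T$. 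Hence $L_Sf=Tf$, and since $f$ was arbitrary, $T=L_S$.

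The argument is short, and its only real content is assembling the right auxiliary facts: the main point to get right is that $L_Sf$ always lies in $\3I_S$ (so the hypothesis is applicable to it), together with the identities $TL_S=T$ and $L_S^2=L_S$ that close the loop — all three of which are supplied by the Complete Ergodic Theorem, so no further order-convergence analysis is needed.
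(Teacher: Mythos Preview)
Your proof is correct and matches the (implicit) argument of the paper: the corollary is stated there as an immediate consequence of the preceding characterization that ergodicity is equivalent to $L_Sf=Tf$ on $\3I_S$, together with Birkhoff's Complete Ergodic Theorem, and you have supplied exactly the short computation that makes this explicit. The key identities you invoke---$SL_S=L_S$, $TL_S=T$, $L_S^2=L_S$, and $L_Sh=h$ for $h\in\3I_S$---are all available from Theorem \ref{complete riesz birkhoff theorem} and the discussion in the introduction, so nothing is missing.
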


Combining Theorems \ref{complete riesz birkhoff theorem}  and Corollary \ref{tsm corollary} we obtain the following corollary which fundamental to the proof of a conditional Riesz space version of the Kac formula,  Theorem \ref{kac-2}.

\begin{corollary} \label{tsm-inverse}
If  $(E,T,S,e)$ is a conditional expectation preserving system with $T$ strictly positive, $E$ $T$-universally complete and $S$ surjective then $(E,T,S^{-1},e)$ is a conditional expectation preserving system,  $E=\3E_S=\3E_{S^{-1}}$,  $R(T)\subset R(L_S)=\3I_S=\3I_{S^{-1}}=R(L_{S^{-1}})$ and $L_{S^{-1}}=L_S$.
\end{corollary}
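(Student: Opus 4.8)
The plan is to bootstrap $S$ into a bijection, run Birkhoff's complete ergodic theorem (Theorem \ref{complete riesz birkhoff theorem}) for $S$ and for $S^{-1}$, and then identify the two Ces\`aro limits by applying Corollary \ref{tsm corollary} to a suitable auxiliary system.

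First I would note that strict positivity of $T$ makes $S$ injective by Lemma \ref{injective}, so together with surjectivity $S$ is bijective, and by the remarks preceding Lemma \ref{lemma1} the tuple $(E,T,S^{-1},e)$ is again a conditional expectation preserving system. Applying Theorem \ref{complete riesz birkhoff theorem} to $(E,T,S,e)$ and to $(E,T,S^{-1},e)$ gives $E=\3E_S=\3E_{S^{-1}}$ and shows that $L_S$ and $L_{S^{-1}}$ are conditional expectation operators with $TL_S=T=TL_{S^{-1}}$, $L_S=SL_S$, $L_{S^{-1}}=S^{-1}L_{S^{-1}}$ (and $L_Se=e=L_{S^{-1}}e$ since the Ces\`aro averages fix $e$). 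Bijectivity of $S$ gives $Sf=f$ if and only if $f=S^{-1}f$, so $\3I_S=\3I_{S^{-1}}$. For the identity $R(L_S)=\3I_S$: if $f\in\3I_S$ then $L_Sf=f$ by \cite[Lemma 2.3]{hkw-2}, so $f\in R(L_S)$; conversely if $f\in R(L_S)$ then $f=L_Sf=SL_Sf=Sf$, so $f\in\3I_S$. The same reasoning for $S^{-1}$ gives $R(L_{S^{-1}})=\3I_{S^{-1}}$, so the only displayed assertions still to prove are $R(T)\subseteq R(L_S)$ and $L_S=L_{S^{-1}}$.

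For $R(T)\subseteq R(L_S)$ I would use Lemma \ref{averaging}. Note $L_S$ is a strictly positive conditional expectation with $L_Se=e$: if $g\in E_+$ and $L_Sg=0$ then $Tg=TL_Sg=0$, forcing $g=0$. Let $p$ be a component of $e$ lying in $R(T)$, so $Tp=p$. Lemma \ref{averaging} applied to $L_S$ gives $P_{L_Sp}\ge P_p$, and applied to $T$ with $g=L_Sp$ (using $T(L_Sp)=Tp=p$) it gives $P_p\ge P_{L_Sp}$; hence $P_{L_Sp}=P_p$. Since $0\le L_Sp\le L_Se=e$ and $L_Sp$ lies in the range of $P_p$, we get $L_Sp=P_p L_Sp\le P_p e=p$, so $p-L_Sp\ge 0$; but $T(p-L_Sp)=Tp-T(L_Sp)=p-p=0$, and strict positivity of $T$ yields $L_Sp=p$. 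Since $R(T)$ is, by Freudenthal's theorem, generated as a Dedekind complete Riesz space by the components of $e$ it contains and $L_S$ is linear and order continuous, $L_Sf=f$ for every $f\in R(T)$; that is, $R(T)\subseteq R(L_S)$.

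The remaining, and hardest, point is $L_S=L_{S^{-1}}$. The idea is to apply Corollary \ref{tsm corollary} to the system $(E,L_S,S^{-1},e)$. A short Ces\`aro computation — from $S_{n+1}f=\tfrac{n}{n+1}S_nf+\tfrac{1}{n+1}S^nf$ and $S_nf\to L_Sf$ one gets $\tfrac1n S^nf\to 0$, hence $L_S(Sf)=\lim_n\tfrac1n\sum_{k=1}^{n}S^kf=L_Sf$ — shows $L_SS=L_S$; combined with $SL_S=L_S$ this means $L_S$ commutes with $S$ and therefore with $S^{-1}$, so $L_SS^{-1}=L_S$ and $(E,L_S,S^{-1},e)$ is a conditional expectation preserving system. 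It is $L_S$-conditionally ergodic because $\3I_{S^{-1}}=\3I_S=R(L_S)$. Moreover $L_S$ is strictly positive and $E$ is $L_S$-universally complete (if $L_Sf_\alpha$ is order bounded then so is $Tf_\alpha=TL_Sf_\alpha$, so $T$-universal completeness applies), so Corollary \ref{tsm corollary} applies to $(E,L_S,S^{-1},e)$ and tells us that, being conditionally ergodic, it satisfies $L_S=L_{S^{-1}}$, the latter being the Ces\`aro limit of $S^{-1}$. I expect this last step to be the main obstacle: one must produce exactly the right auxiliary system and check that the Ces\`aro construction is insensitive to replacing $S$ by $S^{-1}$ within it, after which the already-proved Corollary \ref{tsm corollary} does the work.
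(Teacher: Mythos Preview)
Your argument is correct, and it diverges from the paper's proof in two substantive places.

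For $R(T)\subset R(L_S)$ the paper does an $f$-algebra computation: for a component $p\in R(T)$ it expands $T(p-L_Sp)^2$ using the averaging properties of $T$ and $L_S$ together with $TL_S=T$ and $p^2=p$, finds that this vanishes, and concludes $p=L_Sp$ by strict positivity. Your route via Lemma~\ref{averaging} is cleaner: two applications (one with $L_S$, one with $T$ and $g=L_Sp$) pin down $P_{L_Sp}=P_p$, which immediately gives $L_Sp\le p$ and hence equality from $T(p-L_Sp)=0$. This avoids any multiplicative structure beyond what is already encoded in Lemma~\ref{averaging}.

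For $L_S=L_{S^{-1}}$ the paper simply observes $R(L_S)=R(L_{S^{-1}})$ and invokes the And\^{o}--Douglas type uniqueness result of \cite{radon paper} to identify the two conditional expectations by their common range. Your approach is self-contained within the paper: you build the auxiliary system $(E,L_S,S^{-1},e)$, verify it is a conditional expectation preserving system (the key identity $L_SS^{-1}=L_S$ following from your Ces\`aro telescoping $\tfrac{1}{n+1}S^nf\to 0$), check strict positivity and $L_S$-universal completeness of $E$ by passing through $T$, observe that $\3I_{S^{-1}}=R(L_S)$ makes it $L_S$-conditionally ergodic, and then Corollary~\ref{tsm corollary} delivers $L_S=L_{S^{-1}}$. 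This trades an external citation for an internal bootstrapping, at the cost of a few extra verifications; it also makes transparent \emph{why} the two limits agree, namely because $(E,L_S,S^{-1},e)$ is already ergodic in the conditional sense.
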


\begin{proof}
As $E$ is $T$-universally complete,  by Theorem \ref{complete riesz birkhoff theorem},  $E=\3E_S$.

As $T$ is strictly positive,  $S$ is injective,  so by the surjectivity assumption,  $S$ is bijective.
Thus $S^{-1}$ exists and is a Riesz homomorphism.  Further $Se=e$ gives $e=S^{-1}e$ while from $TS=T$ we have $T=TSS^{-1}=TS^{-1}$ making $(E,T,S^{-1},e)$ a conditional expectation preserving system and again $E=\3E_{S^{-1}}$.

Finally $f \in \3I_S$ if and only if $Sf=f$ which is equivalent to (after applying $S^{-1}$) $f=S^{-1}f$.  Thus $\3I_S=\3I_{S^{-1}}$. For $f \in \3I_S$ and $L_Sf=f$ which  gives $f=L_Sf\in R(L_S)$ so $\3I_S\subset R(L_S)$. 
However, from  Theorem \ref{complete riesz birkhoff theorem} $SL_S=L_S$ giving that $R(L_S)\subset \3I_S$ so $R(L_S)= \3I_S$. Hence
$R(L_S)=\3I_S=\3I_{S^{-1}}=R(L_{S^{-1}})$.

For each component $p$ of $e$ with $p\in R(T)$ we have that $0\le L_Sp\le L_Se=e$ giving that $L_Sp\in E_e$. Thus $(p-L_Sp)^2\in E_e$ and by the averaging property of $L_S$, along with $p^2=p$, we have
$$(p-L_Sp)^2=(p^2-2p\cdot L_Sp+(L_Sp)^2=p-2p\cdot L_Sp+L_S(p\cdot L_Sp)$$
which after applying $T$ and using that $TLS=T$,  $Tp=p$ and the averaging property of $T$ gives
\begin{eqnarray*}
T(p-L_Sp)^2
&=&Tp-2T(p\cdot L_Sp)+TL_S(p\cdot L_Sp)\\
&=&p-T(p\cdot L_Sp)\\
&=&p-p\cdot TL_Sp\\
&=&p-p\cdot Tp=p-p^2=0.
\end{eqnarray*}
From strict positivity of $T$, $(p-L_S p)^2=0$ giving $p=L_Sp$,  Hence each component of $e$ which is in $R(T)$ is also in $R(L_S)$. But every element of $R(T)$ can be expressed as an order limit of a net of linear combinations of components of $e$ which are in $R(T)$ and $R(L_S)$ is a Dedekind complete Riesz subspace of $E$. Hence $R(T)\subset R(L_S)=\3I_S$. Now  from \cite{radon paper}, $L_S=L_{S^{-1}}$.
\end{proof}

\section{Poincar\'{e}'s recurrence theorem in Riesz Spaces} \label{sec-poin}

We begin by characterizing recurrence,  in Definition \ref{recurrence definition},  for the case of $S$ bijective.

\begin{lemma}\label{recurrence lemma}
 Let $\parent{E,T,S,e}$ be a conditional expectation preserving system with $S$ bijective, then a component $p$ of $q$ is recurrent with respect to $q$ a component of $e$ if and only if $$p\le\bigvee_{n\in\N} S^{-n}q.$$
\end{lemma}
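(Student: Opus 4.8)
The plan is to prove the two implications separately, working throughout with band projections and components of $e$, and exploiting the bijectivity of $S$ together with Lemma \ref{lemma1}.

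For the forward implication, suppose $p$ is recurrent with respect to $q$, so by Definition \ref{recurrence definition} there are components $p_n$ of $p$ with $\bigvee_{n\in\N}p_n=p$ and $S^np_n\le q$ for each $n$. Since $S$ is bijective and maps components of $e$ to components of $e$ (as noted after Lemma \ref{injective}), applying $S^{-n}$ to the inequality $S^np_n\le q$ gives $p_n=S^{-n}(S^np_n)\le S^{-n}q$. Hence $p=\bigvee_n p_n\le\bigvee_n S^{-n}q$, which is the claimed inequality. This direction should be essentially immediate once one records that $S^{-n}$ preserves order and sends components to components.

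For the converse, suppose $p\le\bigvee_{n\in\N}S^{-n}q$. The idea is to build the decomposition $\{p_n\}$ greedily/disjointly: set $r_1:=p\wedge S^{-1}q$, and inductively $r_{n}:=\left(p-\bigvee_{k<n}r_k\right)\wedge S^{-n}q$. Then the $r_n$ are components of $p$ (using that $S^{-n}q$ is a component of $e$, so meeting with it is a band projection, and that differences and meets of components of $e$ are again components of $e$ in the $f$-algebra $E_e$), they satisfy $S^nr_n\le S^n(S^{-n}q)=q$, and by construction $\bigvee_{n}r_n=\bigvee_n\left(p\wedge S^{-n}q\right)=p\wedge\bigvee_n S^{-n}q=p$, where the last equality uses the hypothesis $p\le\bigvee_n S^{-n}q$ together with order continuity of the band projection onto the band generated by $p$ (or equivalently distributivity of $\wedge$ over arbitrary suprema of components of $e$ in a Dedekind complete Riesz space). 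Thus $\{r_n\}$ witnesses recurrence of $p$ with respect to $q$.

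The main obstacle is the bookkeeping in the converse: one must be careful that the greedy differences $r_n$ really are components of $e$ (this is where the $f$-algebra structure of $E_e$ and the fact that $S^{-n}$ carries $C_e(E)$ into $C_e(E)$ are used) and that the supremum identity $\bigvee_n\left(p\wedge S^{-n}q\right)=p\wedge\bigvee_n S^{-n}q$ is legitimate in the order-limit sense — this is the infinite distributive law for band projections, valid here because $E$ is Dedekind complete and band projections are order continuous. With those two points in hand the argument closes cleanly.
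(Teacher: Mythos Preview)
Your proof is correct and follows essentially the same route as the paper, which also identifies $p\wedge S^{-n}q$ (there called $m_n$) as the witnessing components and concludes via the distributive law $\bigvee_n(p\wedge S^{-n}q)=p\wedge\bigvee_n S^{-n}q$. Your greedy disjointification is an unnecessary embellishment---Definition~\ref{recurrence definition} does not require the $p_n$ to be pairwise disjoint, so taking $p_n=p\wedge S^{-n}q$ directly already works---and Lemma~\ref{lemma1} mentioned in your plan is never actually invoked.
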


\begin{proof}
 For $n\in\N$, let
 $$m_n=\bigvee \{g\in C_p(E)\,|\,S^ng\le q\}.$$
Here $m_n$ is the maximal component of $p$ with $S^nm_n\le q$ and hence the maximal component of $p$ to recur at $n$ iterates of $S$.
So in terms of $m_n$, $p$ is recurrent with respect to $q$ if and only if 
 $\displaystyle{\bigvee_{n\in\N} m_n \ge p.}$
 As $S$ is bijective, $m_n=p\wedge S^{-n}q$, making $p$ recurrent with respect to $q$ if and only if
 $\displaystyle{p\le \bigvee_{n\in\N} p\wedge S^{-n}q}$
 from which the result follows.
\end{proof}

 \begin{theorem}[Poincar\'{e}]\label{thm-poincare}
Let $(E,T,S,e)$ be a conditional expectation preserving system with $T$ strictly positive and $S$ surjective then each component $p$ of $q$ where $q$ is a component of $e$ is recurrent with respect to $q$. 
\end{theorem}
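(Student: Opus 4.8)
The plan is to pass through the characterization of recurrence in Lemma~\ref{recurrence lemma} and then run the classical Poincar\'{e} argument --- ``a wandering set has measure zero'' --- with the conditional expectation $T$ playing the role of the probability measure. Since $T$ is strictly positive, Lemma~\ref{injective} makes $S$ injective, and together with surjectivity $S$ is bijective; as recorded after Lemma~\ref{injective} (and in Corollary~\ref{tsm-inverse}), $(E,T,S^{-1},e)$ is again a conditional expectation preserving system, so $TS^{-1}=T$ and hence $TS^{-k}=T$ for every $k\in\N$. Moreover $S^{-1}$, and hence each $S^{-m}$, is a positive Riesz homomorphism with $S^{-m}e=e$ that carries components of $e$ to components of $e$. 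By Lemma~\ref{recurrence lemma} it is enough to prove $q\le\bigvee_{n\in\N}S^{-n}q$, because then every component $p\le q$ lies below this supremum and is recurrent with respect to $q$.

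To this end, set $r:=q\wedge\bigwedge_{n\in\N}(e-S^{-n}q)=q-q\wedge\bigvee_{n\in\N}S^{-n}q$, the ``never-returning'' part of $q$ (equivalently $r=\bigwedge_{n\in\N}N_q(n)$); the goal becomes $r=0$. The heart of the argument is to show that the components $S^{-k}r$, $k\ge 0$, of $e$ are pairwise disjoint. For $0\le i<j$, applying the lattice isomorphism $S^{i}$ reduces $S^{-i}r\wedge S^{-j}r=0$ to $r\wedge S^{-m}r=0$ with $m=j-i\ge 1$; and since $r\le q$ gives $S^{-m}r\le S^{-m}q$ while $r\le e-S^{-m}q$ by construction, we obtain $r\wedge S^{-m}r\le (e-S^{-m}q)\wedge S^{-m}q=0$, using that $S^{-m}q$ is a component of $e$.

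Finally, the $S^{-k}r$ being pairwise disjoint positive elements dominated by $e$, their partial sums satisfy $\sum_{k=0}^{N}S^{-k}r\le e$; applying $T$ and using $TS^{-k}=T$ gives $(N+1)\,Tr\le e$ for all $N\in\N$. Since $E$ is Archimedean (being Dedekind complete) and $Tr\ge 0$, this forces $Tr=0$, and strict positivity of $T$ then yields $r=0$, i.e. $q\le\bigvee_{n\in\N}S^{-n}q$; Lemma~\ref{recurrence lemma} completes the proof. I expect the only points needing care to be the justification that the lattice manipulations with $S^{-m}$ are legitimate (positivity, $S^{-m}e=e$, preservation of components of $e$), which is already available from the discussion following Lemma~\ref{injective}, and the Archimedean passage from $(N+1)Tr\le e$ to $Tr=0$; the remainder is the measure-theoretic Poincar\'{e} proof transcribed into Riesz-space language.
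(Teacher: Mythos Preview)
Your proof is correct and follows essentially the same approach as the paper's: define the ``never-returning'' component $r$, show its iterates are pairwise disjoint, and use the Archimedean property together with strict positivity of $T$ to force $r=0$. The only cosmetic difference is that you work with the backward iterates $S^{-k}r$ and invoke $TS^{-1}=T$, whereas the paper uses the forward iterates $S^{k}r$ and the hypothesis $TS=T$ directly; the arguments are otherwise parallel.
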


\begin{proof}
By Lemma \ref{injective} and the surjectivity of $S$,  $S$ is bijective.
From Lemma \ref{recurrence lemma},  so it suffices to prove that
$$p\le\bigvee_{n\in\N} S^{-n}q.$$
This however follows from
$\displaystyle{q\le \bigvee \limits_{n=1}^{\infty} S^{-n}q}$, which we prove below.

Here $\displaystyle{\bigvee \limits_{n=1}^{\infty} S^{-n}q}$ exists in $E$ as $E$ is Dedekind complete and $S^{-n}q\le S^{-n}e=e$ for all $n\in\N$.  Further $\displaystyle{\bigvee \limits_{n=1}^{\infty} S^{-n}q}$ is a component of $e$,  giving 
 that $$r:=q\wedge\left(e - \bigvee \limits_{n=1}^{\infty} S^{-n}q\right)=q\wedge \bigwedge^{j=-1}_{-\infty} (e - S^jq)\ge 0$$ is a component of $q$.  Thus $S^nr$ is a component of $e$ (as $S$ maps components of $e$ to components of $e$). 
Since $Se=e$ we have
$$0\le q\wedge S^nr=q\wedge S^nq\wedge \bigwedge\limits_{-\infty}^{j=n-1} \left(e-S^j q\right)\le q\wedge (e-q)=0$$
which,  after application of $S^{-n}$,  gives $r\wedge S^{-n}q=0$ for all $n \ge 1.$
However $0\le r\le q$ so $r\wedge S^{-n}r=0$ for all $n \ge 1.$
Applying $S^{n+k}$ to the above gives $S^{n+k}r\wedge S^kr=0$ for all $n\in\N$, $k\ge 0$,
making  $(S^n r)_{n \ge 0}$ a sequence of disjoint components of $e$.  Thus 
$$\sum \limits_{n=1}^{N} S^n r =\bigvee \limits_{n=1}^{N} S^n r \le e$$ 
for all $N \in \mathbb{N}.$
Now,  applying $T$ to the above equation gives 
$$NTr= \sum \limits_{n=1}^{N} T r  =\sum \limits_{n=1}^{N} TS^n r \le Te=e$$
for all $N\in\N$.
Since $E$ is Archimedean,  this gives $Tr=0$.  As $T$ is strictly positive  and $r\ge 0$ it follows that $r=0.$
\end{proof}

\section{Kac's formula in Riesz spaces}\label{sec-kac}

\begin{lemma}
Let $(E,T,S,e)$ be a conditional expectation preserving system with $T$ strictly positive and $S$ surjective then 
\begin{equation}\label{N-1}
N_p(n)=p\bigwedge_{j=1}^n (e-S^{-j}p)
\end{equation}
and
\begin{equation}\label{N-2}
n_p=\sum_{k=1}^\infty k S^{-k}p\wedge N_p(k-1).
\end{equation}
\end{lemma}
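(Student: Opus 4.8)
The plan is to prove \eqref{N-1} by rewriting the conditions defining the components in \eqref{np} in terms of $S^{-1}$, and then to obtain \eqref{N-2} from \eqref{N-1} by a telescoping computation in the $f$-algebra $E_e$.

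First, by Lemma \ref{injective} strict positivity of $T$ makes $S$ injective, so the surjectivity hypothesis makes $S$ bijective; hence $S^{-1}$ is a bijective Riesz homomorphism with $S^{-1}e=e$. Fix $j\in\N$ and a component $q$ of $p$. Since $S^{-j}$ is a lattice homomorphism, $S^{-j}(p\wedge S^jq)=S^{-j}p\wedge q$, and since $S^{-j}$ is injective, $p\wedge S^jq=0$ if and only if $S^{-j}p\wedge q=0$. Now $q\le p\le e$, so $q\in C_e(E)$, and $S^{-j}p\in C_e(E)$; therefore, in the Boolean algebra $C_e(E)$, the condition $S^{-j}p\wedge q=0$ is equivalent to $q\le e-S^{-j}p$. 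Consequently the family of components over which the supremum in \eqref{np} is taken is
$$\{q\in C_p(E)\mid q\le e-S^{-j}p,\ j=1,\dots,n\}=\left\{q\in C_e(E)\mid q\le r\right\}=C_r(E),\qquad r:=p\wedge\bigwedge_{j=1}^n(e-S^{-j}p).$$
Here $r$ is a finite meet of components of $e$, hence itself a component of $e$, and $r\le p$; since $\bigvee C_r(E)=r$, we conclude $N_p(n)=r$, which is \eqref{N-1} (for $n=0$ this reduces to the convention $N_p(0)=p$, the empty meet being $e$).

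For \eqref{N-2}, \eqref{N-1} gives $N_p(k)=N_p(k-1)\wedge(e-S^{-k}p)$ for $k\ge1$, so $(N_p(k))_{k\ge0}$ is decreasing. Passing to the $f$-algebra $E_e$, where meets of components of $e$ coincide with products and $e$ is the algebraic unit, we get $N_p(k)=N_p(k-1)(e-S^{-k}p)$, whence
$$N_p(k-1)-N_p(k)=N_p(k-1)\bigl(e-(e-S^{-k}p)\bigr)=N_p(k-1)\,S^{-k}p=N_p(k-1)\wedge S^{-k}p.$$
Substituting this identity into the defining expression $n_p=\sum_{k=1}^\infty k\bigl(N_p(k-1)-N_p(k)\bigr)$ yields $n_p=\sum_{k=1}^\infty k\,S^{-k}p\wedge N_p(k-1)$, i.e. \eqref{N-2}; as already noted, the summands $N_p(k-1)-N_p(k)$ are pairwise disjoint components of $e$, so the series converges in $E_u^+$.

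The only step with genuine content is the reformulation $p\wedge S^jq=0\iff q\le e-S^{-j}p$ in the first paragraph, which is where bijectivity of $S$ (equivalently, surjectivity together with strict positivity of $T$) and $S^{-1}e=e$ are used; everything after that is Boolean-algebra and $f$-algebra bookkeeping. The one point requiring a little care is that the supremum defining $N_p(n)$ is actually attained by an element of $C_e(E)$ — here automatic, since that supremum equals the finite meet $r$.
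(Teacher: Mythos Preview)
Your proof is correct and follows essentially the same approach as the paper's: both rewrite the disjointness condition $p\wedge S^jq=0$ as $q\le e-S^{-j}p$ via bijectivity of $S$ to obtain \eqref{N-1}, and then compute $N_p(k-1)-N_p(k)=S^{-k}p\wedge N_p(k-1)$ to get \eqref{N-2}. Your version is slightly more explicit in invoking Lemma~\ref{injective} and the $f$-algebra structure of $E_e$, but the argument is the same.
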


\begin{proof}
From (\ref{np}) we have
\begin{eqnarray*}
N_p(n)&=&\bigvee\{q\in C_p(E)\,|\, S^{-j}p\wedge q =0 \forall j=1,\dots n\}\\
&=& \bigvee\{q\in C_p(E)\,|\,(e- S^{-j}p)\ge q  \forall j=1,\dots n\}\\
&=&\bigwedge_{j=1}^n p\wedge (e- S^{-j}p)
\end{eqnarray*}
and hence (\ref{N-1}).
Further
\begin{eqnarray*}
N_p(k-1)-N_p(k)&=&p\wedge (e-(e-S^{-k}p))\wedge\bigwedge_{j=1}^{k-1} (e-S^{-j}p)\\
&=&S^{-k}p\wedge N_p(k-1)
\end{eqnarray*}
from which (\ref{N-2}) follows.
\end{proof}

\begin{lemma}\label{kac-1}
 Let $\parent{E,T,S,e}$ be a conditional expectation preserving system where $T$ is strictly positive and $S$ is surjective.  
 Let $L_S$ be as defined in Theorem \ref{complete riesz birkhoff theorem}.
If $E$ is $T$-universally complete then for each component $p$ of $e$ we have that $n_p\in E$ and
$$L_Sn_p=L_S\left(\bigvee_{k=0}^\infty S^{-k}p\right).$$
\end{lemma}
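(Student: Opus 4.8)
The plan is to construct the Riesz-space analogue of the Kakutani skyscraper over the component $p$. First I would record the ingredients already available: $N_p(0)=p$ by~(\ref{np0}), the sequence $N_p(k)=p\bigwedge_{j=1}^k(e-S^{-j}p)$ from~(\ref{N-1}) is decreasing, $d_k:=N_p(k-1)-N_p(k)=S^{-k}p\wedge N_p(k-1)$ from~(\ref{N-2}) (so $d_k\le S^{-k}p$ and $d_k\perp N_p(k)$), and $n_p=\sum_{k=0}^\infty N_p(k)$ in $E_u^+$ by~(\ref{n-p}). Poincar\'e's theorem (Theorem~\ref{thm-poincare}) with $q=p$, via Lemma~\ref{recurrence lemma}, gives $p\le\bigvee_{n\in\N}S^{-n}p$, hence $\bigwedge_k N_p(k)=p\wedge\left(e-\bigvee_{n\ge1}S^{-n}p\right)=0$; that is, $N_p(k)\downarrow 0$ and $N_p(k)=\sum_{l>k}d_l$.

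The decisive object is the family $G_k:=S^kN_p(k)$, $k\ge0$, of components of $e$, which will serve as the ``floors'' of the tower. The key elementary estimate I would prove is that $S^mN_p(j)\le e-p$ whenever $1\le m\le j$: indeed $N_p(j)\le e-S^{-m}p$ is one of the factors of the infimum defining $N_p(j)$, and applying the positive unital linear operator $S^m$ gives $S^mN_p(j)\le S^m e-S^mS^{-m}p=e-p$. Pairwise disjointness of the $G_k$ is then immediate: for $i<j$, applying the lattice isomorphism $S^{-i}$ to $G_i\wedge G_j$ yields $N_p(i)\wedge S^{j-i}N_p(j)$, and since $1\le j-i\le j$ this is $\le p\wedge(e-p)=0$, so $G_i\wedge G_j=0$ by injectivity of $S^{-i}$. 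Consequently $\sum_{k=0}^nG_k=\bigvee_{k=0}^nG_k\le e$, so these partial sums increase to a component $\bigvee_{k=0}^\infty G_k$ of $e$ which lies in $E$.

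Next I would identify $\bigvee_{k=0}^\infty G_k$ with $R_p:=\bigvee_{k=0}^\infty S^{-k}p$. For ``$\le$'': $R_p$ is $S$-invariant, because $S^{-1}R_p=\bigvee_{k\ge1}S^{-k}p\le R_p$ forces $S^{-1}R_p=R_p$ by Note~\ref{note-comp} applied to the system $(E,T,S^{-1},e)$; since $R_p\ge p$ is $S$-invariant, $R_p\ge S^kp\ge S^kN_p(k)=G_k$ for all $k$. For ``$\ge$'': I claim $\bigvee_kG_k$ is itself $S$-invariant. Using linearity and $N_p(k)=N_p(k+1)+d_{k+1}$, we have $S^{k+1}N_p(k)=G_{k+1}+S^{k+1}d_{k+1}$, a disjoint sum because $N_p(k+1)\perp d_{k+1}$; moreover $S^{k+1}d_{k+1}\le S^{k+1}S^{-(k+1)}p=p=G_0$, so $S^{k+1}N_p(k)=G_{k+1}\vee S^{k+1}d_{k+1}\le\bigvee_jG_j$. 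Hence, by order continuity of $S$, $S\left(\bigvee_kG_k\right)=\bigvee_kS^{k+1}N_p(k)\le\bigvee_jG_j$, and Note~\ref{note-comp} upgrades this to equality. As $\bigvee_kG_k$ is $S$-invariant and contains $G_0=p$, it contains $S^{-k}p$ for every $k\ge0$, whence $\bigvee_kG_k\ge R_p$; combining the two inclusions, $\bigvee_{k=0}^\infty G_k=R_p$.

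Finally I would assemble the statement. Applying $T$ to $\sum_{k=0}^nG_k\le e$ and using $TS^k=T$ (so $TN_p(k)=TS^kN_p(k)=TG_k$) gives $T\left(\sum_{k=0}^nN_p(k)\right)=\sum_{k=0}^nTG_k\le Te=e$ for every $n$; since $E$ is $T$-universally complete, the increasing sequence $\left(\sum_{k=0}^nN_p(k)\right)_n$ converges in $E$, i.e. $n_p\in E$. By Theorem~\ref{complete riesz birkhoff theorem}, $L_S$ is a conditional expectation operator, hence order continuous, and $L_SS=L_S$ (since $S_n(Sf)-S_nf=\tfrac1n(S^nf-f)\to 0$ as $E=\3E_S$), so $L_SS^k=L_S$. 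Therefore, passing limits through $L_S$,
\begin{equation*}
L_Sn_p=\sum_{k=0}^\infty L_SN_p(k)=\sum_{k=0}^\infty L_SS^kN_p(k)=\sum_{k=0}^\infty L_SG_k=L_S\left(\bigvee_{k=0}^\infty G_k\right)=L_SR_p=L_S\left(\bigvee_{k=0}^\infty S^{-k}p\right),
\end{equation*}
which is the assertion. I expect the main obstacle to be the third step: recognising that the correct floor map is $G_k=S^kN_p(k)$ (the naive choices, such as $S^{-k}N_p(k)$, already fail to be disjoint) and proving that $\bigvee_kG_k$ is $S$-invariant — this is the Riesz-space incarnation of the classical fact that the Kakutani tower over a set exactly exhausts the orbit of that set.
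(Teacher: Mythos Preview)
Your proof is correct and takes a genuinely different route from the paper. The paper establishes by induction on $n$ the identity
\[
L_Sr=\sum_{k=1}^nL_S\Bigl((e-r)\wedge\bigwedge_{j=1}^{k} S^{-j}r\Bigr)+L_S\Bigl(\bigwedge_{k=0}^{n} S^{-k}r\Bigr)
\]
for an arbitrary component $r$ of $e$, then specialises to $r=e-p$ so that the summands become $L_S(N_p(k))$ and the tail becomes $L_S\bigl(e-\bigvee_{k=0}^n S^{-k}p\bigr)$; letting $n\to\infty$ gives the result. That argument never names the skyscraper floors and works entirely at the level of $L_S$-identities. You instead build the Kakutani tower explicitly, proving that the floors $G_k=S^kN_p(k)$ are pairwise disjoint and that $\bigvee_kG_k=R_p$ as components of $e$ --- a purely lattice-theoretic equality, stronger than the $L_S$-level statement the paper obtains at this stage --- after which the lemma follows by applying any order-continuous operator absorbing $S$. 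The price you pay is the $S$-invariance argument for $\bigvee_kG_k$, which the paper's induction sidesteps. Both proofs invoke $T$-universal completeness in the same way to place $n_p$ in $E$. Your appeal to Poincar\'e to obtain $N_p(k)\downarrow0$ is correct but, as you may have noticed, not actually used in the main line of your argument.
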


\begin{proof}
 Let $r\in C_e(E)$.  For convenience we set
 $\displaystyle{\bigwedge_{j=1}^0 S^{-j}(e-r)=e}$.
 We begin by proving by induction on $n\in\N$ that
 \begin{eqnarray}\label{induction-2}
L_Sr=\sum_{k=1}^nL_S\left((e-r)\wedge\bigwedge_{j=1}^{k} S^{-j}r\right)+
 L_S\left(\bigwedge_{k=0}^{n} S^{-k}r \right).
 \end{eqnarray}
Since $L_SS=SL_S=L_S$ we have $L_S=L_SSS^{-1}=L_SS^{-1}$ and thus 
 $$L_Sr=L_SS^{-1}r=L_S((e-r)\wedge S^{-1}r)+L_S(r\wedge S^{-1}r),$$
from which it follows that (\ref{induction-2}) holds for $n=1$.
Further, 
 \begin{eqnarray*}
 L_S\left(\bigwedge_{k=0}^{n} S^{-k}r\right)&=&
 L_SS\left(\bigwedge_{k=0}^{n} S^{-k}r\right)\\
&=&L_S\left(\bigwedge_{k=1}^{n+1} S^{-k}r\right)\\
&=&L_S\left((e-r)\wedge\bigwedge_{k=1}^{n+1} S^{-k}r\right)
+L_S\left(r\wedge\bigwedge_{k=1}^{n+1} S^{-k}r\right)\\
&=&L_S\left((e-r)\wedge\bigwedge_{k=1}^{n+1} S^{-k}r\right)
+L_S\left(\bigwedge_{k=0}^{n+1} S^{-k}r\right)
 \end{eqnarray*}
 which if (\ref{induction-2}) holds for $n$ gives that (\ref{induction-2}) holds for $n+1$.
 Hence giving that (\ref{induction-2}) holds by induction for all $n\in\N$.
 
For $p\in C_e(E)$ we set $r=e-p$ in (\ref{induction-2}) to give
  \begin{eqnarray*}\label{induction-3}
L_S(e-p)
 &=&\sum_{k=1}^nL_S\left(p\wedge\bigwedge_{j=1}^{k} S^{-j}(e-p)\right)+
 L_S\left(\bigwedge_{k=0}^{n} S^{-k}(e-p)\right)\\
 &=&\sum_{k=1}^nL_S(N_p(k))+
 L_S\left(\bigwedge_{k=0}^{n} S^{-k}(e-p)\right).
   \end{eqnarray*}
   So by (\ref{np0}),
     \begin{eqnarray}\label{dc-1}
e=L_S(e-p)+L_Sp
 &=&\sum_{k=0}^nL_S(N_p(k))+
 L_S\left(\bigwedge_{k=0}^{n} S^{-k}(e-p)\right).
   \end{eqnarray}
   
   Here we note that $\displaystyle{\left(\sum_{k=0}^n N_p(k)\right)_{n\in\N_0}}$ is an increasing sequence in $E_u^+$ and by (\ref{dc-1}),
   $\displaystyle{T\left(\sum_{k=0}^nN_p(k)\right)\le e}$,
   so from the $T$-universal completeness of $E$, $\displaystyle{\left(\sum_{k=0}^n N_p(k)\right)_{n\in\N_0}}$ converges in order in $E$, i.e. $\displaystyle{n_p=\sum_{k=0}^\infty N_p(k)\in E}$. 
   Now taking the order limit as $n\to\infty$ in (\ref{dc-1}) gives
     \begin{eqnarray}\label{kac-eq-1}
e  &=&L_Sn_p+
 L_S\left(\bigwedge_{k=0}^\infty S^{-k}(e-p)\right).
   \end{eqnarray}
  However $Se=e$ so $e=S^{-k}e$ giving $S^{-k}(e-p)=e-S^{-k}p$ and thus
   \begin{equation}\label{kac-eq-2}
   \bigwedge_{k=0}^\infty S^{-k}(e-p)=\bigwedge_{k=0}^\infty (e-S^{-k}p)=e-\bigvee_{k=0}^\infty S^{-k}p.
   \end{equation}
    Combining (\ref{kac-eq-1}) and (\ref{kac-eq-2}), and using that $L_Se=e$ we have
 \begin{eqnarray*}
e  &=&L_Sn_p+e-L_S\left(\bigvee_{k=0}^\infty S^{-k}p\right)
    \end{eqnarray*}
          from which the lemma follows.
\end{proof}

As $TL_S=T$,  applying $T$ to 
$$L_Sn_p=L_S\left(\bigvee_{k=0}^\infty S^{-k}p\right).$$
in the above Lemma gives the following corollary.

\begin{corollary}
 Let $\parent{E,T,S,e}$ be a conditional expectation preserving system where $T$ is strictly positive and $S$ is surjective. 
If $E$ is $T$-universally complete then for each component $p$ of $e$ we have that $n_p\in E$ and
$$Tn_p=T\left(\bigvee_{k=0}^\infty S^{-k}p\right).$$
\end{corollary}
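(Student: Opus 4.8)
The plan is to derive this corollary directly from Lemma~\ref{kac-1} by applying the conditional expectation operator $T$ to both sides of the identity established there. First I would invoke Lemma~\ref{kac-1}: under the hypotheses that $T$ is strictly positive, $S$ is surjective and $E$ is $T$-universally complete, that lemma already gives that $n_p$ exists as an element of $E$ (so that the expression $Tn_p$ is meaningful) and that
\[
L_Sn_p=L_S\left(\bigvee_{k=0}^\infty S^{-k}p\right).
\]
Both sides of this equation lie in $R(L_S)\subseteq E$, so $T$ may be applied to each.

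Next I would apply $T$ to both sides and use the relation $TL_S=T$. This relation is exactly one of the conclusions of Birkhoff's Complete Ergodic Theorem (Theorem~\ref{complete riesz birkhoff theorem}), whose hypotheses — $(E,T,S,e)$ a conditional expectation preserving system with $T$ strictly positive and $E$ $T$-universally complete — are all in force here. Applying $T$ to the left-hand side gives $TL_Sn_p=Tn_p$, and applying it to the right-hand side gives $TL_S\left(\bigvee_{k=0}^\infty S^{-k}p\right)=T\left(\bigvee_{k=0}^\infty S^{-k}p\right)$; note that $\bigvee_{k=0}^\infty S^{-k}p$ is a component of $e$ (it is an increasing supremum of components of $e$, which exists by Dedekind completeness since each $S^{-k}p\le e$), so this is a legitimate element of $E$ on which $T$ acts. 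Equating the two results yields
\[
Tn_p=T\left(\bigvee_{k=0}^\infty S^{-k}p\right),
\]
which is the claim.

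Since essentially all of the analytic work has been carried out in Lemma~\ref{kac-1} (the induction establishing (\ref{induction-2}), the telescoping decomposition, and the $T$-universal completeness argument producing $n_p\in E$), there is no substantial obstacle at this stage; the only point requiring care is checking that the hypotheses of Theorem~\ref{complete riesz birkhoff theorem} are indeed available so that $TL_S=T$ may be used, and that $n_p\in E$ so that $Tn_p$ is defined — both of which are guaranteed by the statement of Lemma~\ref{kac-1} and its hypotheses, which coincide with those of the corollary.
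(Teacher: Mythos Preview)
Your proposal is correct and matches the paper's own argument exactly: the paper simply remarks that since $TL_S=T$, applying $T$ to the identity $L_Sn_p=L_S\left(\bigvee_{k=0}^\infty S^{-k}p\right)$ from Lemma~\ref{kac-1} yields the corollary. Your additional observations (that $n_p\in E$ comes from Lemma~\ref{kac-1} and that the hypotheses of Theorem~\ref{complete riesz birkhoff theorem} are available) are appropriate but do not differ from what the paper implicitly relies on.
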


{\bf Example:} Take $E=\ell^1(4)$ with $e(x)=1$ for all $x\in\{1,2,3,4\}$, $
\tau(x)=\left\{\begin{array}{ll} 2,& x=1\\ 1,& x=2,\\ 4,& x=3,\\ 3,& x=4\end{array}\right.$ with
components of $e$, $p_i(x)=\delta_i(x)$ where $\delta$ is the Kronecker symbol. The surjective Riesz homomorphism $S$ on $E$ is given by $Sf(x)=f(\tau(x))$.
Set $Tp_i=\frac{1}{2}(p_1+p_2)$ if $i\in\{1,2\}$ and $Tp_i=\frac{1}{2}(p_3+p_4)$ if $i\in\{3,4\}$.
Take $q_1=p_1+p_2$ and $q_2=p_1+p_3$. 
\begin{itemize}
\item[(a)]A direct computation gives $n_{q_1}=q_1$ with
$Tq_1=Tn_{q_1}$ here $\bigvee_{k=0}^\infty S^{-k}q_1=q_1$ so indeed verifying Kac.
\item[(b)]
Now consider $n_{q_2}=2q_2$ and $Tq_2=\frac{1}{2}e$ so $Tn_{q_2}=2Tq_2$ but here
 $\bigvee_{k=0}^\infty S^{-k}q_2=e$ so Kac gives $Tn_{q_2}=Te=2Tq_2$ indeed verifying Kac.
 \end{itemize}
 
 This example shows that the conditional version of the Kac formula is not as simple as that for expectations (where only case (b) appears).

\begin{theorem}[Kac]\label{kac-2}
 Let $\parent{E,T,S,e}$ be a conditional expectation preserving system,  where $T$ is strictly positive,  $E$ is $T$-universally complete and $S$ is surjective. 
Let $L_S$ be as defined in Theorem \ref{complete riesz birkhoff theorem}.
For each $p$ a component of $e$ we have $$L_Sn_p=P_{L_Sp}e.$$
\end{theorem}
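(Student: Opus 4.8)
The plan is to build directly on Lemma~\ref{kac-1}, which under the present hypotheses already gives $n_p\in E$ and $L_Sn_p=L_S\!\left(\bigvee_{k=0}^\infty S^{-k}p\right)$. Write $P:=\bigvee_{k=0}^\infty S^{-k}p$; this supremum exists in $E$ and is a component of $e$ since $E$ is Dedekind complete and each $S^{-k}p$ is a component of $e$. First I would note that $P$ is $S$-invariant: $S^{-1}P=\bigvee_{k=1}^\infty S^{-k}p\le P$, so applying Note~\ref{note-comp} to the conditional expectation preserving system $(E,T,S^{-1},e)$ gives $S^{-1}P=P$, hence $SP=P$, i.e. $P\in\3I_S$; since $L_Sf=f$ on $\3I_S$ this yields $L_Sn_p=L_SP=P$. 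Thus the theorem reduces to the identity $P=P_{L_Sp}e$.

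I would establish this via the two inequalities $P_{L_Sp}e\le P$ and $P_{L_Sp}e\ge P$. The first is immediate: $p=S^0p\le P$ and $L_S$ is positive, so $0\le L_Sp\le L_SP=P$, whence the band generated by $L_Sp$ is contained in that generated by $P$, giving $P_{L_Sp}\le P_P$ and $P_{L_Sp}e\le P_Pe=P$. For the reverse inequality --- the substance of the proof --- the idea is to apply the averaging inequality of Lemma~\ref{averaging} to the operator $L_S$ in place of $T$. This is legitimate because, by Theorem~\ref{complete riesz birkhoff theorem}, $L_S$ is a conditional expectation operator on $E$; it is strictly positive (if $g\ge0$ and $L_Sg=0$ then $Tg=TL_Sg=0$, so $g=0$); and $L_Se=e$ since $e\in\3I_S$. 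Hence $P_{L_Sg}\ge P_g$ for all $g\in E_+$. Using $L_SS^{-k}=L_S$ (from $L_SS=L_S$ by composition with $S^{-1}$, exactly as in the proof of Lemma~\ref{kac-1}), apply this with $g=S^{-k}p$ to get $P_{L_Sp}=P_{L_S(S^{-k}p)}\ge P_{S^{-k}p}$, so $P_{L_Sp}e\ge P_{S^{-k}p}e=S^{-k}p$ for every $k\ge0$; taking the supremum over $k$ gives $P_{L_Sp}e\ge P$. Combining the two inequalities, $L_Sn_p=P=P_{L_Sp}e$.

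I expect the only step requiring real thought to be the observation that Lemma~\ref{averaging} should be invoked for $L_S$ rather than $T$, together with the verification that $L_S$ genuinely satisfies its hypotheses (strict positivity and $L_Se=e$). The remaining ingredients --- existence of the suprema, $S$-invariance of $P$, the identity $L_SS=L_S$, and the passage from $0\le a\le b$ to $P_a\le P_b$ for the associated band projections --- are all routine and in substance already recorded earlier in the paper.
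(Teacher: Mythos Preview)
Your proof is correct and follows essentially the same route as the paper's: reduce via Lemma~\ref{kac-1} to showing $\bigvee_{k\ge0}S^{-k}p=P_{L_Sp}e$, establish $S$-invariance of this supremum so that $L_S$ fixes it, and then obtain the two inequalities using Lemma~\ref{averaging} applied to $L_S$. The only cosmetic difference is in the reverse inequality: the paper applies Lemma~\ref{averaging} once to get $P_{L_Sp}e\ge p$ and then uses $S^{-1}$-invariance of $P_{L_Sp}e$ (as an element of $R(L_S)=\3I_{S^{-1}}$) to transport this to $P_{L_Sp}e\ge S^{-k}p$, whereas you use $L_SS^{-k}=L_S$ first and then invoke Lemma~\ref{averaging} for each $S^{-k}p$---these are interchangeable rephrasings of the same step.
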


\begin{proof}
Let $$w=\bigvee_{k=0}^\infty S^{-k}p.$$
From the Lemma \ref{kac-1}, we have
$L_Sn_p=L_Sw$,
so it remains to shown that 
$L_Sw=P_{L_Sp}e.$
We recall that $R(L_S)=\3I_S=\3I_{S^{-1}}=R(L_{S^{-1}}).$
Here $w\le e$ is a component of $e$ and
$$S^{-1}w=\bigvee_{k=1}^\infty S^{-k}p\le \bigvee_{k=0}^\infty S^{-k}p=w.$$
Thus $w\le Sw$, so, by Note \ref{note-comp}, $Sw=w$ giving 
 $w\in \3I_{S}=R(L_S)$ and so $L_Sw=w$.

From the definition of $w$ we have $w\ge p$ so $w=L_Sw\ge L_Sp$ and thus $w=P_we\ge P_{L_Sp}e$.
But $P_{L_Sp}e\ge P_pe=p$, Lemma \ref{averaging}. Here $P_{L_Sp}e\in R(L_S)=\3I_S=\3I_{S^{-1}}$,  so $P_{L_Sp}e$ is $S^{-1}$ invariant. Hence
$$P_{L_Sp}e=S^{-k}P_{L_Sp}e\ge S^{-k}p$$
for each $k=0,1,2,\dots$. Taking suprema over $k=0,1,2,\dots$ gives
$$P_{L_Sp}e\ge w.$$
Hence
$P_{L_Sp}e= w.$
\end{proof}

Applying $T$ to the above result gives
$$Tn_p=TP_{L_Sp}e$$
while in the case of $\parent{E,T,S,e}$ being conditionally ergodic $L_S=T$ so the above theorem gives the following Corollary.

\begin{corollary}[Kac]\label{kac-3}
 Let $\parent{E,T,S,e}$ be a conditionally ergodic conditional expectation preserving system,  where $T$ is strictly positive,  $E$ is $T$-universally complete and $S$ is surjective. 
For each $p$ a component of $e$ we have $$Tn_p=P_{Tp}e.$$
\end{corollary}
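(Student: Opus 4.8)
The plan is to obtain Corollary \ref{kac-3} as the specialization of Theorem \ref{kac-2} to conditionally ergodic systems, the point being that conditional ergodicity collapses $L_S$ onto $T$. First I would invoke Corollary \ref{tsm corollary}: since $(E,T,S,e)$ is assumed conditionally ergodic with $T$ strictly positive and $E$ $T$-universally complete, that corollary yields $T=L_S$. (Note that all three hypotheses needed for Corollary \ref{tsm corollary} — strict positivity of $T$, $T$-universal completeness of $E$, and conditional ergodicity — are exactly what we have assumed; surjectivity of $S$ is extra and is what we need for Theorem \ref{kac-2}.) Next, Lemma \ref{kac-1} guarantees that $n_p\in E$ for every component $p$ of $e$, so the statement $Tn_p=P_{Tp}e$ is meaningful.

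The main step is then Theorem \ref{kac-2}, which under precisely the standing hypotheses ($T$ strictly positive, $E$ $T$-universally complete, $S$ surjective) gives
\[
L_Sn_p=P_{L_Sp}e.
\]
Substituting $L_S=T$ on both sides immediately produces $Tn_p=P_{Tp}e$, which is the claim. Equivalently, if one prefers not to substitute directly, apply the operator $T$ to the displayed identity: on the left-hand side $TL_S=T$ (Theorem \ref{complete riesz birkhoff theorem}) gives $TL_Sn_p=Tn_p$; on the right-hand side, $Tp\in R(T)_+$ forces $P_{Tp}e\in R(T)$ by \cite[Lemma 3.1(b)]{expectation paper}, so $T$ fixes $P_{Tp}e$, and one again reads off $Tn_p=TP_{L_Sp}e=TP_{Tp}e=P_{Tp}e$.

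I do not expect a genuine obstacle here: the work has all been done in Theorem \ref{kac-2} and Corollary \ref{tsm corollary}. The only point that needs a moment's care is making sure the identification $L_S=T$ is legitimately available — i.e. that Corollary \ref{tsm corollary} (hence Birkhoff's complete ergodic theorem, Theorem \ref{complete riesz birkhoff theorem}) applies — which it does under the stated hypotheses, and that the right-hand side $P_{Tp}e$ is genuinely $T$-invariant so that the ``apply $T$'' route closes up; both are routine. Thus the corollary follows in a couple of lines.
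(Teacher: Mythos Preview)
Your proposal is correct and matches the paper's own derivation: the paper simply remarks that in the conditionally ergodic case $L_S=T$, so Theorem \ref{kac-2} immediately yields $Tn_p=P_{Tp}e$. The extra care you take (invoking Corollary \ref{tsm corollary} explicitly and noting $P_{Tp}e\in R(T)$ for the ``apply $T$'' variant) is fine and slightly more detailed than the paper, but the argument is the same.
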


 
 \section{Application to Probabilistic Processes}\label{sec-appl}

 Consider a probability space $(\Omega,{\cal A},\mu)$ where $\mu$ is a complete measure (i.e. all subset of sets of measure zero are measurable).  Take $\Sigma$ a sub-$\sigma$-algebra of ${\cal A}$. Let $E$ denote the space of a.e. equivalence classes of measurable functions $f:\Omega\to \R$ for which the sequence $(\E[\min(|f(x)|, {\bf n})|\Sigma])_{n\in\N}$, is bounded above by an a.e. finite valued measurable function. 
Here ${\bf n}$ is the (equivalence class of the) function with value $n$ a.e.
  Then $E$ is the natural domain of the conditional expectation operator $T=\E[\cdot|\Sigma]$.  Here $L^1(\Omega,{\cal A},\mu)\subset E$
 and the extension of  $\E[\cdot|\Sigma]$ to $E$ is given by the a.e.  pointwise limits
 $$Tf=\lim_{n\to\infty}\E[\min(f^+,{\bf n})|\Sigma]-\lim_{n\to\infty}\E[\min(f^-,{\bf n})|\Sigma],\quad f\in E.$$
The space $E$ is a $T$-universally complete Riesz space  with  weak order unit ${\bf 1}$ and $T$ is a strictly positive Riesz space conditional expectation operator on $E$ having $T{\bf 1}={\bf 1}$.  If $\Sigma=\{ A\subset \Omega\,|\,\mu(A)=0 \mbox{ or } \mu(A)=1\}$ then $T$ is the expectation operator and $E=L^1(\Omega,{\cal A},\mu)$.

Let $\tau:\Omega\to\Omega$ be a map with $\tau^{-1}(A)\in{\cal A}$ (i.e. $\tau$ is $\mu$-measurable) and $\E[\chi_{\tau^{-1}(A)}|\Sigma]=\E[\chi_{A}|\Sigma]$, for all $A\in {\cal A}$. 
Further we require that for each $A\in {\cal A}$ there is $B_A\in {\cal A}$ so that $\mu(A\Delta \tau^{-1}(B_A))=0$.  
Now $Sf:=f\circ \tau$ is a surjective Riesz homomorphism on $E$ with $S{\bf 1}={\bf 1}$ and $TS=T$.  
The system $(E,T,S,e)$ is a conditional expectation preserving system, with $S$ surjective.  
Theorem \ref{complete riesz birkhoff theorem} gives that 
$$L_Sf=\lim_{n\to\infty}\frac{1}{n}\sum_{k=0}^{n-1}S^kf=\lim_{n\to\infty}\frac{1}{n}\sum_{k=0}^{n-1}f\circ \tau^k$$
converges a.e. pointwise to a conditional expectation operator on $E$ (which when restricted to 
$L^1(\Omega,{\cal A},\mu)$ is a classical conditional expectation operator).  The system $(E,T,S,e)$ is conditionally ergodic if and only if $L_S=T$ which is equivalent to $\tau^{-1}(A)=A$ with $A\in{\cal A}$,  if and only if $A\in \Sigma$.
Here $n_A=n_{\chi_A}$ in the a.e. sense on $A$, and 
applying Corollary \ref{kac-3} we obtain that
$$\E[n_A|\Sigma]=\chi_{\{x\in\Omega|\PP[A|\Sigma](x)>0\}}.$$
where $\PP[A|\Sigma]=\E[\chi_A|\Sigma]$ is the conditional probability of $A$ given $\Sigma$.
  
\bibliographystyle{amsplain}



\end{document}